\numberwithin{equation}{section}
\newtheorem{theorem}{Theorem}[section]
\newtheorem{prop}[theorem]{Proposition}
\newtheorem{lem}[theorem]{Lemma}
\theoremstyle{remark}
\newcommand{\R}{\mathbb{R}}
\newcommand{\N}{\mathbb{N}}
\author[M.~Ltifi]{Jamel benameur And Maroua Ltifi}
\address{Department of Mathematics, Faculty of Science of Gab\`es, university of Gab\`es; Tunisia}
\email{\sl Jamelbenameur@gmail.com}
\email{\sl widaltifi@gmail.com}
\title[Strong solution of modified anistropic 3D-Navier-Stokes equations ]
{Strong solution of modified anistropic 3D-Navier-Stokes equations}
\begin{document}
	\begin{abstract}
		In this paper we study the anisotropic incompressible Navier-Stokes equations with a logarithm damping $\alpha \log(e+|u|^2)|u|^2u$ in $H^{0.1}$, where we used new methods, new tools and Fourier analysis.
	\end{abstract}

	%@@@@@@@@@@@@@@@@@@@@@@@@@@@@@@@@@@@@%@@@@@@@@@@@@@@@@@@@@@@@@@@@@@@@@@@@@%@@@@@@@@@@@@@@
	
	\subjclass[2010]{35-XX, 35Q30, 76N10}
	\keywords{Navier-Stokes Equations; Critical spaces; Long time decay}

	%@@@@@@@@@@@@@@@@@@@@@@@@@@@@@@@@@@@@%@@@@@@@@@@@@@@@@@@@@@@@@@@@@@@@@@@@@%@@@@@@@@@@@@@@
	\maketitle
	\tableofcontents

	%@@@@@@@@@@@@@@@@@@@@@@@@@@@@@@@@@@@@%@@@@@@@@@@@@@@@@@@@@@@@@@@@@@@@@@@@@%@@@@@@@@@@@@@@@
	
	\section{\bf Introduction}
	The goal of this paper is to investigate the modified anisotropic Navier-Stokes system shown below
	$$(NSAn_{\log})_{\alpha}
	\begin{cases}
		\partial_t u
		-\nu\Delta_h u+ u.\nabla u  +\alpha \log(e+|u|^2)|u|^2u =\;\;-\nabla p\hbox{ in } \mathbb R^+\times \mathbb R^3\\
		{\rm div}\, u = 0 \hbox{ in } \mathbb R^+\times \mathbb R^3\\
		u(0,x) =u^0(x) \;\;\hbox{ in }\mathbb R^3.
	\end{cases}
	$$
		where $u=u(t,x)=(u_1,u_2,u_3)$ and $p=p(t,x)$ denote respectively the unknown velocity and the unknown pressure of the fluid at the point $(t,x)\in \mathbb R^+\times \mathbb R^3$, $\alpha>0$, $\Delta_{h}=\partial^{2}_1+\partial^{2}_2$ and $\partial_{i}$ denotes
		the partial derivative in the direction $x_i$. The terms $(u.\nabla u):=u_1\partial_1 u+u_2\partial_2 u+u_3\partial_3u$, while $u^0=(u_1^o(x),u_2^o(x),u_3^o(x))$ is an initial given velocity. If $u^0$ is quite regular, the divergence free condition determines the pressure $p$. We recall in our case it was assumed the viscosity is unitary ($\nu=1$) in order to simplify the calculations and the proofs of our results.\\
	Clearly, when $\alpha=0$ it is corresponds to the classical anisotropic Navier-Stockes equation for more details the reader is referenced to the book \cite{JP} and \cite{R}.\\
the first step of this work is to study the modified Navier-stockes for $\beta>3$ :
$$	\begin{cases}
		\partial_t u
		-\Delta_{h} u+ u.\nabla u  +\alpha |u|^{\beta-1}u =\;\;-\nabla p\hbox{ in } \mathbb R^+\times \mathbb R^3\\
		{\rm div}\, u = 0 \hbox{ in } \mathbb R^+\times \mathbb R^3
	\end{cases}
	$$
	which is exemplified by the following theorem.
	\begin{theorem}\label{theo1}
			Let $u^0\in H^{0.1}(\mathbb R^3)$ be a divergence free vector fields. For $\beta>3$  there is a unique global solution $u\in L^\infty(\R^+,H^{0.1}(\mathbb R^3)\cap C(\R^+,H^{-2}(\R^3))\cap L^2(\R^+,\dot{H}^1(\mathbb R^3))$ such that
	\begin{equation}\label{eqth01}\|u(t)\|_{L^2}^2+2\int_0^t\|\nabla_{h} u\|_{L^2}^2+2\alpha\int_0^t\|u\|_{L^{\beta+1}}^{\beta+1}\leq \|u^0\|_{L^2}^2.\end{equation}
\begin{align}\label{eqth02}\nonumber
	\|\partial_{3} u(t)\|_{L^2}^2+2\int_0^t\|\nabla_{h}\partial_{3} u\|_{L^2}^2+\alpha(\beta-1) \int_{0}^{t}\||u|^{\beta-3} |\partial_{3}|u_n|^2|^2\|_{L^{1}}\\+2\alpha\int_0^t\||u|^{\beta-1} \partial_{3} |u|^2\|_{L^1}\leq\|\partial_{3}u^0\|_{L^2}+16\int_0^t\||u|^{2} \partial_{3} |u|^2\|_{L^1}.\end{align}
	\end{theorem}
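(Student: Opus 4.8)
The plan is to construct the solution through a Friedrichs regularization, establish the two a priori estimates \eqref{eqth01}--\eqref{eqth02} at the approximate level, pass to the limit by compactness, and treat uniqueness afterwards through the monotonicity of the damping. Concretely, I would freeze the nonlinearity with the spectral cut-off $J_n$ (the Fourier multiplier by $\mathbf 1_{\{|\x|\le n\}}$) and solve, via Cauchy--Lipschitz in the divergence-free space $J_n L^2$, the Leray-projected ODE
$$\p_t u^n-\De_h u^n+\mathbb P J_n(u^n\cdot\na u^n)+\al\,\mathbb P J_n(|u^n|^{\be-1}u^n)=0,\qquad u^n(0)=J_n u^0 ,$$
where $\mathbb P$ is the Leray projector. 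The a priori bounds derived below prevent blow-up, so each $u^n$ is global.

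Estimate \eqref{eqth01} is the basic energy balance: pairing the equation with $u^n$ in $L^2$, the pressure and the transport term drop ($\LR{u^n\cdot\na u^n,u^n}=0$), the anisotropic viscosity yields $\|\na_h u^n\|_{L^2}^2$ and the damping yields $\al\|u^n\|_{L^{\be+1}}^{\be+1}$; integrating in time gives \eqref{eqth01}, with $\|J_n u^0\|_{L^2}\le\|u^0\|_{L^2}$ on the right. For \eqref{eqth02} I would apply $\p_3$ and pair with $\p_3 u^n$. The delicate point is the damping term: using $u\cdot\p_3 u=\tfrac12\p_3|u|^2$,
$$\LR{\p_3(|u|^{\be-1}u),\p_3 u}=(\be-1)\!\int|u|^{\be-3}(u\cdot\p_3 u)^2+\int|u|^{\be-1}|\p_3 u|^2\ge 0,$$
which, up to the normalisation of constants, supplies the two dissipative integrals on the left of \eqref{eqth02}. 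For the transport term one writes $\LR{\p_3(u\cdot\na u),\p_3 u}=\LR{(\p_3 u)\cdot\na u,\p_3 u}$ (the remaining piece $\LR{u\cdot\na\p_3 u,\p_3 u}$ vanishes by incompressibility); its only non-horizontal contribution $\int\p_3 u_3\,|\p_3 u|^2$ is rewritten through $\p_3 u_3=-\p_1 u_1-\p_2 u_2$ and, after an integration by parts moving a horizontal derivative onto $\p_3 u$, bounded by $\varepsilon\|\na_h\p_3 u\|_{L^2}^2$ plus a constant times the nonlinear term appearing on the right of \eqref{eqth02}.

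The heart of the argument, and the step I expect to be the main obstacle, is closing \eqref{eqth02} by absorbing that residual nonlinear term. This is where $\be>3$ is decisive. After the Cauchy--Schwarz/Young splitting above against the horizontal dissipation, one is left to control $\int|u^n|^2|\p_3 u^n|^2$; writing $|u^n|^2=|u^n|^2(\mathbf 1_{\{|u^n|\le M\}}+\mathbf 1_{\{|u^n|>M\}})$ and using $|u^n|^2\lec M^{3-\be}|u^n|^{\be-1}$ on the high set, the large-amplitude part is absorbed (for $M$ large) into the damping dissipation $\al\int|u^n|^{\be-1}|\p_3 u^n|^2$ coming from the left of \eqref{eqth02}, while the low-amplitude part is bounded by $M^2\|\p_3 u^n\|_{L^2}^2$ and closed by Gronwall. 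Anisotropic Sobolev embeddings of Gagliardo--Nirenberg type, controlling $\|\cdot\|_{L^\infty_{x_3}L^2_{x_h}}$ by $\|\cdot\|_{L^2}^{1/2}\|\p_3\cdot\|_{L^2}^{1/2}$, guarantee the integrability needed to justify these manipulations uniformly in $n$. The outcome is uniform control of $\|\p_3 u^n\|_{L^\infty_t L^2}$, $\|\na_h\p_3 u^n\|_{L^2_{t,x}}$ and of the damping norms.

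Finally, the uniform bounds give weak-$*$ limits in $L^\infty(\R^+,H^{0.1})$ and $L^2(\R^+,\dot H^1)$, the $H^{0.1}$ membership following by interpolating the $L^2$ and $\p_3$ controls against the horizontal smoothing, while the equation bounds $\p_t u^n$ in a negative-order space; Aubin--Lions then provides strong $L^2_{loc}$ convergence, enough to pass to the limit in the transport and damping nonlinearities, and lower semicontinuity of the norms transfers \eqref{eqth01}--\eqref{eqth02} to $u$, the continuity $C(\R^+,H^{-2})$ coming from the equation. For uniqueness I would set $w=u-v$ for two solutions with the same datum and run the $L^2$ estimate on $w$: the damping difference is nonnegative by the monotonicity $\LR{|u|^{\be-1}u-|v|^{\be-1}v,\,u-v}\ge0$, the transport difference is absorbed using the $L^2_t\dot H^1$ regularity, and Gronwall forces $w\equiv0$.
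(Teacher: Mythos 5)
Your proposal follows essentially the same route as the paper: Friedrichs regularization, the $L^2$ energy identity for \eqref{eqth01}, the $\dot H^{0,1}$ estimate in which the sign of $\langle\partial_3(|u|^{\beta-1}u),\partial_3 u\rangle$ produces the damping dissipation and the transport term reduces to $\int|u|^2|\partial_3 u|^2$ for \eqref{eqth02}, absorption of that term into the damping via $|u|^2\le \varepsilon|u|^{\beta-1}+C_\varepsilon$ (your $M$-splitting), which is exactly where $\beta>3$ enters and which the paper leaves implicit, followed by compactness and monotonicity-based uniqueness. The one place you are materially thinner than the paper is uniqueness: with only the horizontal dissipation $\|\nabla_h w\|_{L^2}^2$ available, the transport difference cannot be absorbed from generic $L^2_t\dot H^1$ regularity alone; one needs the anisotropic trilinear estimates (controls of type $L^\infty_{x_3}L^2_{x_h}$ and $L^2_{x_3}L^4_{x_h}$ exploiting $\nabla_h\partial_3 u\in L^2_tL^2$ from \eqref{eqth02} and the divergence-free condition), as the paper carries out in the proof of Theorem \ref{theo2}.
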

Despite this, the continuity and uniqueness of this anisotropic modified equation remain a major unresolved issue for $\beta=3$ Indeed, because of the inequality $(\ref{eqth02})$, the problem is limited to the case $0<\alpha<8$.
The function $\log(e+|u|^2)$ will be included in our statement as a trick. So we'll look at the limiting case $\beta=3$ as second step of this paper.\\The following theorem illustrates the main result of our work:
	\begin{theorem}\label{theo2} Let $u^0\in H^{0,1}(\mathbb R^3)$ be a divergence free vector fields, then there is a unique global solution of $(NSAn_{\log})_{\alpha}$:
	$u\in L_{loc}^\infty(\R^+,H^{0,1}(\mathbb R^3)\cap C(\R^+,L^{2}(\R^3))\cap L_{loc}^2(\R^+,\dot H^1(\mathbb R^3))$ and $\log(e+|u|^2)|u|^4,\frac{|u|^2}{e+|u|^2} |\partial_{3}|u|^2|^2,  \log(e+|u|^2) |\partial_{3}|u|^2|^2,\log(e+|u|^2)|u|^2 |\partial_{3} u|^2\in L^1_{loc}(\R^+,L^1(\R^3))$. Moreover, for all $t\geq0$ \begin{equation}\label{eqth1}\|u(t)\|_{L^2}^2+2\int_0^t\|\nabla_{h} u\|_{L^2}^2+2\alpha\int_0^t\|\log(e+|u|^2)|u|^4\|_{L^1}\leq \|u^0\|_{L^2}^2.\end{equation}
	\begin{align}\label{eqth2}
		\nonumber\| \partial_{3} u(t)\|_{L^2}^2+2\int_0^t\|\nabla_{h}\partial_{3} u\|_{L^2}^2+\alpha\int_0^t\|\frac{|u|^2}{e+|u|^2} |\partial_{3}|u|^2|^2|\|_{L^1}\\+\alpha  \int_0^t\|\log(e+|u|^2) |\partial_{3}|u|^2|^2\|_{L^1}+\alpha  \int_0^t\|\log(e+|u|^2)|u|^2 |\partial_{3} u|^2\|_{L^1}&\leq \|\partial_{3} u^0\|_{L^2}^2e^{b_{\alpha}t} ,\end{align}
	where $b_{\alpha}=e^{\frac{3}{\alpha}}-e$
\end{theorem}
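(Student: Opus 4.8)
The plan is to obtain the solution by a regularization/compactness scheme and to read off \eqref{eqth1}--\eqref{eqth2} from uniform a priori estimates. First I would set up a Friedrichs approximation: truncate the transport and damping nonlinearities by the spectral projector $J_n$ onto frequencies $|\xi|\le n$ and solve the resulting ODE in $L^2_n:=J_n L^2$. Since the map $u\mapsto\log(e+|u|^2)|u|^2u$ is smooth, the truncated nonlinearity is smooth on $L^2_n$, so Cauchy--Lipschitz gives a unique maximal solution $u_n$, and the estimates below will show it is global and that the family is compact.

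The energy estimate \eqref{eqth1} I would obtain exactly as in Theorem \ref{theo1}: pairing the equation with $u$ in $L^2$ kills the pressure and transport terms, the horizontal viscosity gives $\|\nabla_h u\|_{L^2}^2$, and the damping gives $\alpha\|\log(e+|u|^2)|u|^4\|_{L^1}$; integrating in time yields \eqref{eqth1} together with the uniform bounds $u\in L^\infty(\R^+,L^2)$, $\nabla_h u\in L^2(\R^+\times\R^3)$ and $\log(e+|u|^2)|u|^4\in L^1_{loc}$. The heart of the theorem is \eqref{eqth2}. Applying $\partial_3$ and pairing with $\partial_3 u$ produces $\tfrac12\frac{d}{dt}\|\partial_3 u\|_{L^2}^2+\|\nabla_h\partial_3 u\|_{L^2}^2$ on the good side. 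Writing the damping as $f(u)=g(|u|^2)u$ with $g(s)=s\log(e+s)$, a direct computation gives
\begin{equation*}
\partial_3 f(u)\cdot\partial_3 u=\tfrac12\,g'(|u|^2)\,(\partial_3|u|^2)^2+g(|u|^2)\,|\partial_3 u|^2,\qquad g'(s)=\log(e+s)+\tfrac{s}{e+s},
\end{equation*}
so that, after multiplication by $\alpha$, the damping reproduces precisely the three nonnegative quantities displayed on the left of \eqref{eqth2}.

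The transport contribution $\langle\partial_3(u\cdot\nabla u),\partial_3 u\rangle$ I would reduce, using $\langle u\cdot\nabla\partial_3 u,\partial_3 u\rangle=0$, to $\langle\partial_3 u\cdot\nabla u,\partial_3 u\rangle$; its horizontal part is absorbed into $\|\nabla_h\partial_3 u\|_{L^2}^2$ via anisotropic Ladyzhenskaya/Littlewood--Paley estimates, while the vertical part is rewritten through the divergence-free relation $\partial_3 u_3=-\partial_1 u_1-\partial_2 u_2$ and integrated by parts horizontally, leaving a critical contribution bounded by $\int|u|^2|\partial_3 u|^2$ (the $\beta=3$ analogue of the obstruction $16\int_0^t\||u|^2\partial_3|u|^2\|_{L^1}$ that blocks the pure power law in Theorem \ref{theo1}). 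The hard part --- and the whole reason for the logarithm --- is to absorb this critical term, and here the gain is exact: split $\R^3$ at the threshold $|u|^2=b_\alpha=e^{3/\alpha}-e$. On $\{|u|^2\ge b_\alpha\}$ one has $\alpha\log(e+|u|^2)\ge3$, so the damping term $\alpha\log(e+|u|^2)|u|^2|\partial_3 u|^2$ dominates the critical contribution there; on $\{|u|^2<b_\alpha\}$ the factor $|u|^2$ is bounded by $b_\alpha$, whence $\int|u|^2|\partial_3 u|^2\le b_\alpha\|\partial_3 u\|_{L^2}^2$. This leaves a differential inequality of the form $\frac{d}{dt}\|\partial_3 u\|_{L^2}^2\le b_\alpha\|\partial_3 u\|_{L^2}^2$ (the remaining damping terms staying on the good side), and Gr\"onwall's lemma gives $\|\partial_3 u(t)\|_{L^2}^2\le\|\partial_3 u^0\|_{L^2}^2e^{b_\alpha t}$, i.e. \eqref{eqth2}; note that this exponential growth is exactly what forces the $L^\infty_{loc}$ (rather than $L^\infty$) membership in the statement.

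With \eqref{eqth1}--\eqref{eqth2} uniform in $n$, the family $u_n$ is bounded in $L^\infty_{loc}(\R^+,H^{0,1})\cap L^2_{loc}(\R^+,\dot H^1)$ and $\partial_t u_n$ is bounded in a space of negative Sobolev order, so Aubin--Lions yields strong $L^2_{loc}$ and a.e.\ convergence along a subsequence; the uniform $L^1$ bounds then let me pass to the limit in both the transport term and the log-damping term by Vitali/dominated convergence, producing a solution in the stated class, with $u\in C(\R^+,L^2)$ following from the equation and the energy balance. For uniqueness I would take the difference $w=u-v$ of two solutions and estimate $\|w\|_{L^2}$: the transport term is closed by Gr\"onwall using the $H^{0,1}$ regularity, while the damping contributes $\alpha\langle f(u)-f(v),w\rangle\ge0$ since the monotonicity of $s\mapsto s\log(e+s)$ makes $u\mapsto f(u)$ a monotone map; thus the damping only helps, and $w\equiv0$.
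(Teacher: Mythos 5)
Your proposal follows essentially the same route as the paper: the $L^2$ energy identity, the $\dot H^{0,1}$ estimate in which the critical term $\int|u|^2|\partial_3 u|^2$ is split at the threshold $\alpha\log(e+|u|^2)=3$ (absorbed by the log-damping on $\{|u|^2\ge e^{3/\alpha}-e\}$ and by Gr\"onwall with constant $b_\alpha$ on the complement), Friedrichs regularization with standard compactness for existence, and monotonicity of $x\mapsto\log(e+|x|^2)|x|^2x$ (the paper's Lemma \ref{lem46}) combined with an anisotropic Gr\"onwall argument for uniqueness. The only place you are noticeably terser than the paper is the uniqueness step, where closing the Gr\"onwall inequality with only horizontal viscosity requires the specific anisotropic H\"older/interpolation bounds ($L^\infty_v L^2_h$ controls via $\partial_3\nabla_h u\in L^2_tL^2_x$, i.e.\ the terms $F_1,F_2$ in the paper), but this is a matter of detail rather than of approach.
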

The rest of our paper is structured as follows. We present some notations, definitions, and preliminary results in the second section. We will look at the global solution of Theorem \ref{theo2} in Section 3. Furthermore, the solution's uniqueness and right continuity.
	\section{\bf Notations and preliminary results}
\subsection{Notations} In this section, we collect some notations and definitions that will be used later.\\
\begin{enumerate}
	\item[$\bullet$] The Fourier transformation is normalized as
	$$
	\mathcal{F}(f)(\xi)=\widehat{f}(\xi)=\int_{\mathbb R^3}\exp(-ix.\xi)f(x)dx,\,\,\,\xi=(\xi_1,\xi_2,\xi_3)\in\mathbb R^3.
	$$
	\item[$\bullet$] The inverse Fourier formula is
	$$
	\mathcal{F}^{-1}(g)(x)=(2\pi)^{-3}\int_{\mathbb R^3}\exp(i\xi.x)g(\xi)d\xi,\,\,\,x=(x_1,x_2,x_3)\in\mathbb R^3.
	$$
	\item[$\bullet$] The convolution product of a suitable pair of function $f$ and $g$ on $\mathbb R^3$ is given by
	$$
	(f\ast g)(x):=\int_{\mathbb R^3}f(y)g(x-y)dy.
	$$
	\item[$\bullet$] If $f=(f_1,f_2,f_3)$ and $g=(g_1,g_2,g_3)$ are two vector fields, we set
	$$
	f\otimes g:=(g_1f,g_2f,g_3f),
	$$
	and
	$$
	{\rm div}\,(f\otimes g):=({\rm div}\,(g_1f),{\rm div}\,(g_2f),{\rm div}\,(g_3f)).
	$$
	Moreover, if $\rm{div}\,g=0$ we obtain
	$$
	{\rm div}\,(f\otimes g):=g_1\partial_1f+g_2\partial_2f+g_3\partial_3f:=g.\nabla f.
	$$
	\item[$\bullet$] Let $(B,||.||)$, be a Banach space, $1\leq p \leq\infty$ and  $T>0$. We define $L^p_T(B)$ the space of all
	measurable functions $[0,t]\ni t\mapsto f(t) \in B$ such that $t\mapsto||f(t)||\in L^p([0,T])$.\\
	\item[$\bullet$] The Sobolev space $H^s(\mathbb R^3)=\{f\in \mathcal S'(\mathbb R^3);\;(1+|\xi|^2)^{s/2}\widehat{f}\in L^2(\mathbb R^3)\}$.\\
	\item[$\bullet$] The homogeneous Sobolev space $\dot H^s(\mathbb R^3)=\{f\in \mathcal S'(\mathbb R^3);\;\widehat{f}\in L^1_{loc}\;{\rm and}\;|\xi|^s\widehat{f}\in L^2(\mathbb R^3)\}$.\\
	\item[$\bullet$] For $R>0$, the Friedritch operator $J_R$ is defined by
	$$J_R(D)f=\mathcal F^{-1}({\bf 1}_{|\xi|<R}\widehat{f}).$$
	\item[$\bullet$] The Leray projector $\mathbb P:(L^2(\R^3))^3\rightarrow (L^2(\R^3))^3$ is defined by
	$$\mathcal F(\mathbb P f)=\widehat{f}(\xi)-(\widehat{f}(\xi).\frac{\xi}{|\xi|})\frac{\xi}{|\xi|}=M(\xi)\widehat{f}(\xi);\;M(\xi)=(\delta_{k,l}-\frac{\xi_k\xi_l}{|\xi|^2})_{1\leq k,l\leq 3}.$$
	\item[$\bullet$] $L^2_\sigma(\R^3)=\{f\in (L^2(\R^3))^3;\;{\rm div}\,f=0\}$.
	\item[$\bullet$] $\dot H^1_\sigma(\R^3)=\{f\in (\dot H^1(\R^3))^3;\;{\rm div}\,f=0\}$.
	\item[$\bullet$] $C_{r}(I,B)=\{f:I\rightarrow B\mbox{ right continuous }\}$ , where $B$ is Banach space and $I$ is an interval.
	\item[$\bullet$] Let $a\in\R,$ we define $a_{+}=\max(a,0)$.
\end{enumerate}
\subsection{Preliminary results}
In this section, we recall some classical results and we give new technical lemmas.
\begin{prop}(\cite{HBAF})\label{prop1} Let $H$ be Hilbert space.
	\begin{enumerate}
		\item If $(x_n)$ is a bounded sequence of elements in $H$, then there is a subsequence $(x_{\varphi(n)})$ such that
		$$(x_{\varphi(n)}|y)\rightarrow (x|y),\;\forall y\in H.$$
		\item If $x\in H$ and $(x_n)$ is a bounded sequence of elements in $H$ such that
		$$(x_n|y)\rightarrow (x|y),\;\forall y\in H.$$
		Then $\|x\|\leq\liminf_{n\rightarrow\infty}\|x_n\|.$
		\item If $x\in H$ and $(x_n)$ is a bounded sequence of elements in $H$ such that
		$$\begin{array}{l}
			(x_n|y)\rightarrow (x|y),\;\forall y\in H\\
			\limsup_{n\rightarrow\infty}\|x_n\|\leq \|x\|,\end{array}$$
		then $\lim_{n\rightarrow\infty}\|x_n-x\|=0.$
	\end{enumerate}
\end{prop}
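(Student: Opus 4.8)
The plan is to treat the three items as three essentially independent consequences of the Hilbert space structure, with only the first requiring any real work. For part (1), I would first reduce to the separable case: since $(x_n)$ is one fixed sequence, the closed linear span $H_0=\overline{\operatorname{span}}\{x_n:n\in\N\}$ is a separable closed subspace, and it suffices to build the weak limit inside $H_0$. Fixing a countable dense set $\{y_k\}_{k\ge1}$ of $H_0$ and writing $M=\sup_n\|x_n\|<\I$, each scalar sequence $\big((x_n\mid y_k)\big)_n$ is bounded because $|(x_n\mid y_k)|\le M\|y_k\|$, so a Cantor diagonal extraction produces a single subsequence $(x_{\varphi(n)})$ along which $(x_{\varphi(n)}\mid y_k)$ converges for every $k$. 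A standard $3\varepsilon$ approximation argument upgrades this to convergence of $(x_{\varphi(n)}\mid y)$ for every $y\in H_0$, and $\ell(y):=\lim_n (x_{\varphi(n)}\mid y)$ is a linear functional on $H_0$ with $|\ell(y)|\le M\|y\|$. By the Riesz representation theorem there is $x\in H_0\subset H$ with $\ell(y)=(x\mid y)$ on $H_0$; finally, for arbitrary $y\in H$ I decompose $y=y_0+y_1$ with $y_0\in H_0$ and $y_1\perp H_0$, so that $(x_{\varphi(n)}\mid y)=(x_{\varphi(n)}\mid y_0)\to(x\mid y_0)=(x\mid y)$, which is the claim.

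For part (2), I would simply test the weak convergence against $y=x$ and use Cauchy--Schwarz. This gives $(x_n\mid x)\to(x\mid x)=\|x\|^2$ while $|(x_n\mid x)|\le\|x_n\|\,\|x\|$ for every $n$, so passing to the $\liminf$,
\[
\|x\|^2=\lim_{n\to\I}(x_n\mid x)\le\liminf_{n\to\I}\|x_n\|\,\|x\| ,
\]
and dividing by $\|x\|$ when $x\ne0$ (the case $x=0$ being trivial) yields $\|x\|\le\liminf_{n\to\I}\|x_n\|$.

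For part (3), I would combine the expansion of the squared norm with both hypotheses. Writing
\[
\|x_n-x\|^2=\|x_n\|^2-2\operatorname{Re}(x_n\mid x)+\|x\|^2 ,
\]
the weak convergence forces $(x_n\mid x)\to\|x\|^2$, so the cross term tends to $-2\|x\|^2$, while the norm hypothesis gives $\limsup_{n\to\I}\|x_n\|^2\le\|x\|^2$. Adding these,
\[
\limsup_{n\to\I}\|x_n-x\|^2\le\|x\|^2-2\|x\|^2+\|x\|^2=0 ,
\]
whence $\|x_n-x\|\to0$.

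The only genuine obstacle is the diagonal-plus-density bookkeeping in part (1) and the reduction that guarantees a single subsequence works against all $y\in H$; parts (2) and (3) are direct inner-product computations once (1) provides the limit $x$. An alternative to the hands-on argument would be to invoke the reflexivity of $H$ together with the Banach--Alaoglu theorem and weak sequential compactness of bounded sets, but the separable reduction and Cantor diagonalization are self-contained and avoid appealing to heavier machinery.
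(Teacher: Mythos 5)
Your proposal is correct in all three parts, and the argument (separable reduction plus Cantor diagonalization and Riesz representation for weak sequential compactness, Cauchy--Schwarz for lower semicontinuity of the norm, and the expansion of $\|x_n-x\|^2$ for the third item) is the standard one. The paper itself supplies no proof of this proposition --- it is quoted directly from Brezis's functional analysis text --- so there is nothing to contrast with; your write-up is essentially the textbook argument the citation points to.
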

\begin{lem}(\cite{JYC})\label{LP}
	Let $s_1,\ s_2$ be two real numbers and $d\in\N$.
	\begin{enumerate}
		\item If $s_1<d/2$\; and\; $s_1+s_2>0$, there exists a constant  $C_1=C_1(d,s_1,s_2)$, such that: if $f,g\in \dot{H}^{s_1}(\mathbb{R}^d)\cap \dot{H}^{s_2}(\mathbb{R}^d)$, then $f.g \in \dot{H}^{s_1+s_2-1}(\mathbb{R}^d)$ and
		$$\|fg\|_{\dot{H}^{s_1+s_2-\frac{d}{2}}}\leq C_1 (\|f\|_{\dot{H}^{s_1}}\|g\|_{\dot{H}^{s_2}}+\|f\|_{\dot{H}^{s_2}}\|g\|_{\dot{H}^{s_1}}).$$
		\item If $s_1,s_2<d/2$\; and\; $s_1+s_2>0$ there exists a constant $C_2=C_2(d,s_1,s_2)$ such that: if $f \in \dot{H}^{s_1}(\mathbb{R}^d)$\; and\; $g\in\dot{H}^{s_2}(\mathbb{R}^d)$, then  $f.g \in \dot{H}^{s_1+s_2-1}(\mathbb{R}^d)$ and
		$$\|fg\|_{\dot{H}^{s_1+s_2-\frac{d}{2}}}\leq C_2 \|f\|_{\dot{H}^{s_1}}\|g\|_{\dot{H}^{s_2}}.$$
	\end{enumerate}
\end{lem}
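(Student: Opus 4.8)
The plan is to prove both parts by the Littlewood--Paley decomposition together with Bony's paraproduct algorithm, since the statement is exactly the $L^2$-based product law $\dot H^{s_1}\cdot\dot H^{s_2}\hookrightarrow\dot H^{s_1+s_2-d/2}$ (so the exponent in the clause ``$f.g\in\dot H^{s_1+s_2-1}$'' should read $s_1+s_2-\frac d2$, consistently with the displayed norm). First I would fix a dyadic partition of unity, writing $\Delta_j$ for the frequency localization around $|\xi|\sim2^j$ and $S_j=\sum_{k<j}\Delta_k$ for the low-frequency cut-off, and recall the Bernstein inequalities together with the characterization $\|f\|_{\dot H^s}^2\simeq\sum_j2^{2js}\|\Delta_jf\|_{L^2}^2$. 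I would then split
$$fg=T_fg+T_gf+R(f,g),\qquad T_fg=\sum_jS_{j-1}f\,\Delta_jg,\quad R(f,g)=\sum_j\Delta_jf\,\widetilde\Delta_jg,$$
and estimate the three pieces separately in $\dot H^{s_1+s_2-d/2}$.

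For the paraproduct $T_fg$, each summand $S_{j-1}f\,\Delta_jg$ is spectrally supported in an annulus of size $2^j$, so I would use Bernstein to write $\|S_{j-1}f\|_{L^\infty}\lec\sum_{k<j-1}2^{k(d/2-s_1)}\bigl(2^{ks_1}\|\Delta_kf\|_{L^2}\bigr)$; because $s_1<d/2$ the geometric weight is summable and the bound collapses to $2^{j(d/2-s_1)}a_j\|f\|_{\dot H^{s_1}}$ with $(a_j)\in\ell^2$. Multiplying by $\|\Delta_jg\|_{L^2}=2^{-js_2}b_j\|g\|_{\dot H^{s_2}}$ gives $2^{j(s_1+s_2-d/2)}\|S_{j-1}f\,\Delta_jg\|_{L^2}\lec a_jb_j\in\ell^2$, and the annulus summation lemma yields $\|T_fg\|_{\dot H^{s_1+s_2-d/2}}\lec\|f\|_{\dot H^{s_1}}\|g\|_{\dot H^{s_2}}$. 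The symmetric term $T_gf$ is handled identically: in part (2), where $s_2<d/2$ is also assumed, bounding $S_{j-1}g$ in $L^\infty$ through its $\dot H^{s_2}$ norm produces the single product $\|f\|_{\dot H^{s_1}}\|g\|_{\dot H^{s_2}}$; in part (1), where only $s_1<d/2$ is granted, I would instead bound $S_{j-1}g$ through its $\dot H^{s_1}$ norm and $\Delta_jf$ through $\dot H^{s_2}$, which is exactly what generates the second, symmetrized summand $\|g\|_{\dot H^{s_1}}\|f\|_{\dot H^{s_2}}$ in the part (1) estimate.

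The remainder is the delicate point, because the target index $s_1+s_2-d/2$ may be negative while the blocks $\Delta_jf\,\widetilde\Delta_jg$ are spectrally supported in balls (not annuli), and the ball-type summation lemma requires a strictly positive regularity index. The hard part is therefore to resist estimating $R$ directly in $\dot H^{s_1+s_2-d/2}$, which would spuriously demand $s_1+s_2>d/2$. The way around this is to estimate $R$ first in an $L^1$-based space: Hölder gives $\|\Delta_jf\,\widetilde\Delta_jg\|_{L^1}\le\|\Delta_jf\|_{L^2}\|\widetilde\Delta_jg\|_{L^2}\lec2^{-j(s_1+s_2)}a_jb_j\|f\|_{\dot H^{s_1}}\|g\|_{\dot H^{s_2}}$, so that $2^{j(s_1+s_2)}\|\Delta_jf\,\widetilde\Delta_jg\|_{L^1}\in\ell^1$. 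Since now the relevant index is $s_1+s_2>0$, the ball summation lemma applies and places $R(f,g)$ in $\dot B^{s_1+s_2}_{1,1}$; a final Bernstein/Besov embedding $\dot B^{s_1+s_2}_{1,1}\hookrightarrow\dot B^{s_1+s_2-d/2}_{2,1}\hookrightarrow\dot H^{s_1+s_2-d/2}$, valid for every real exponent, transfers this to the desired space. This is precisely where the hypothesis $s_1+s_2>0$ is consumed. Collecting the three bounds then gives the single product in part (2) and the symmetrized product in part (1), completing the proof.
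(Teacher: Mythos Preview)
The paper does not supply a proof of this lemma; it is quoted from \cite{JYC} as a preliminary result with no argument given. Your paraproduct argument is correct and is precisely the standard proof found in that reference and in \cite{HB}, including the crucial step of estimating the remainder $R(f,g)$ first in $\dot B^{s_1+s_2}_{1,1}$ so that the ball-type summation lemma is invoked with the positive index $s_1+s_2$ rather than the possibly non-positive $s_1+s_2-\tfrac d2$.
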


\begin{lem}\label{lem45}
	Let $A,T>0$ and $f,g,h:[0,T]\rightarrow\R^+$ three continuous functions such that
	\begin{align}\label{e1}\;\;\;\;\forall t\in[0,T];\;f(t)+\int_0^tg(z)dz & \leq A+\int_0^th(z)f(z)dz.\end{align}
	Then $$\forall t\in[0,T];\;f(t)+\int_0^tg(z)dz\leq A\exp(\int_0^th(z)dz).$$
\end{lem}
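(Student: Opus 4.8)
The plan is to convert the integral inequality into a differential inequality for a single auxiliary function and then solve it with an integrating factor. First I would set
$$\phi(t) := A + \int_0^t h(z) f(z)\, dz.$$
Since $f$ and $h$ are continuous, $\phi$ is of class $C^1$ on $[0,T]$, with $\phi(0) = A$ and $\phi'(t) = h(t) f(t)$ for every $t \in [0,T]$.

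The key observation is to retain the whole left-hand side rather than discarding the $g$-term prematurely. Because $g \geq 0$, the integral $\int_0^t g(z)\,dz$ is nonnegative, so hypothesis (\ref{e1}) yields in particular
$$f(t) \leq f(t) + \int_0^t g(z)\,dz \leq \phi(t), \qquad t \in [0,T].$$
Multiplying through by $h(t) \geq 0$ then gives the differential inequality $\phi'(t) = h(t) f(t) \leq h(t)\phi(t)$.

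Next I would integrate this. Writing $H(t) := \int_0^t h(z)\,dz$, the inequality $\phi' \leq h\,\phi$ is equivalent to $\frac{d}{dt}\bigl(\phi(t) e^{-H(t)}\bigr) \leq 0$, so the map $t \mapsto \phi(t) e^{-H(t)}$ is nonincreasing on $[0,T]$. Evaluating at $t=0$ gives $\phi(t) e^{-H(t)} \leq \phi(0) = A$, that is, $\phi(t) \leq A \exp(H(t))$. Combining this with $f(t) + \int_0^t g(z)\,dz \leq \phi(t)$, which is exactly (\ref{e1}), delivers the claimed bound.

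I do not expect a genuine obstacle here; the only point requiring care is resisting the temptation to drop the $\int_0^t g$ term and apply the classical Gr\"onwall lemma to $f$ alone, which would control $f(t)$ but not the full sum on the left. Bounding the entire left-hand side by the single function $\phi$, whose derivative depends on $f$ only, is precisely what allows the $g$-integral to be carried through to the final estimate.
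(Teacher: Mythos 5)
Your proof is correct. It differs from the paper's in organization rather than in substance: the paper first invokes the classical Gr\"onwall lemma to get $f(t)\leq A\exp\bigl(\int_0^t h\bigr)$ alone, then substitutes this bound back into the right-hand side of (\ref{e1}) and explicitly computes $\int_0^t h(z)\exp\bigl(\int_0^z h\bigr)dz=\exp\bigl(\int_0^t h\bigr)-1$ to recover the full left-hand side. You instead run the integrating-factor argument once, directly on the auxiliary function $\phi(t)=A+\int_0^t h f$, and observe that the entire left-hand side of (\ref{e1}) is dominated by $\phi$; this makes the proof self-contained (no appeal to Gr\"onwall as a black box) and avoids the resubstitution and antiderivative computation. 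Both routes use the same underlying mechanism, and your closing remark correctly identifies the one pitfall — dropping the $\int_0^t g$ term too early — which is exactly what the paper's resubstitution step is designed to repair.
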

\begin{proof} By Gronwall lemma, we get
	$$\forall t\in[0,T];\;f(t)\leq A\exp(\int_0^th(z)dz).$$
	Put this inequality in \ref{e1} we obtain
	$$\begin{array}{lcl}
		\displaystyle f(t)+\int_0^tg(z)dz&\leq&\displaystyle  A+\int_0^th(z)A\exp(\int_0^zh(r)dr)dz\\
		&\leq&\displaystyle  A+A\int_0^th(z)\exp(\int_0^zh(r)dr)dz\\
		&\leq&\displaystyle  A+A\int_0^t\Big(\exp(\int_0^zh(r)dr)\Big)'dz\\
		&\leq&\displaystyle  A+A\Big(\exp(\int_0^th(r)dr)-1\Big)\\
		&\leq&\displaystyle  A\exp(\int_0^th(r)dr),
	\end{array}$$
	which ends the proof.
\end{proof}
	\begin{lem}\label{lem46}
	Let $d\in\N$ Then, for all $x,y\in\R^d$, we have
	$$\langle\log(e+|x|^2)|x|^2x-\log(e+|y|^2)|y|^2y,x-y\rangle\geq0$$	
\end{lem}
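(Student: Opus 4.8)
The plan is to recognize the left-hand side as a monotonicity inequality for a radial vector field and to reduce it to the monotonicity of a single scalar function. I would write $F(x)=\log(e+|x|^2)|x|^2x=\psi(|x|)\,x$, where $\psi(r)=r^2\log(e+r^2)\geq 0$, and set $G(r)=r\,\psi(r)=r^3\log(e+r^2)$. First I would expand
\[
\langle F(x)-F(y),x-y\rangle=\langle F(x),x\rangle-\langle F(x),y\rangle-\langle F(y),x\rangle+\langle F(y),y\rangle,
\]
and compute the diagonal terms exactly: since $\langle x,x\rangle=|x|^2$ one gets $\langle F(x),x\rangle=\psi(|x|)|x|^2=G(|x|)|x|$, and likewise $\langle F(y),y\rangle=G(|y|)|y|$.

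The key step is to control the two cross terms from above. Because $\psi(|x|)\geq 0$, the Cauchy--Schwarz inequality gives $\langle F(x),y\rangle=\psi(|x|)\langle x,y\rangle\leq\psi(|x|)|x|\,|y|=G(|x|)|y|$, and symmetrically $\langle F(y),x\rangle\leq G(|y|)|x|$. Substituting these bounds into the expansion yields
\[
\langle F(x)-F(y),x-y\rangle\geq G(|x|)|x|-G(|x|)|y|-G(|y|)|x|+G(|y|)|y|=\bigl(G(|x|)-G(|y|)\bigr)\bigl(|x|-|y|\bigr).
\]
This argument remains valid when $x=0$ or $y=0$, since then the corresponding terms vanish identically and no division by $|x|$ is required.

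It then remains to prove that $G$ is nondecreasing on $[0,\infty)$, which forces the last product to be nonnegative because its two factors always share the same sign. For this I would simply differentiate: $G'(r)=3r^2\log(e+r^2)+\tfrac{2r^4}{e+r^2}$, which is manifestly $\geq 0$ for every $r\geq 0$. Hence $r\mapsto G(r)$ is nondecreasing, the product $\bigl(G(|x|)-G(|y|)\bigr)\bigl(|x|-|y|\bigr)$ is $\geq 0$, and the lemma follows.

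I do not expect a genuine obstacle here: the only points needing care are keeping the scalar factor $\psi(|x|)$ nonnegative when invoking Cauchy--Schwarz, so that the inequality points in the right direction, and checking the sign of $G'$; both are immediate. The mild degeneracy at the origin is absorbed automatically by the $\psi(|x|)\,x$ formulation, which sidesteps dividing by $|x|$.
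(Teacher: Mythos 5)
Your proof is correct, and it takes a genuinely different route from the paper's. You write the damping term as a radial field $F(x)=\psi(|x|)x$ with $\psi(r)=r^2\log(e+r^2)\ge 0$, bound the two cross terms by Cauchy--Schwarz, and reduce the whole inequality to $\bigl(G(|x|)-G(|y|)\bigr)\bigl(|x|-|y|\bigr)\ge 0$ with $G(r)=r\psi(r)$ nondecreasing; every step checks out, including the sign of $G'$ and the behaviour at the origin. The paper instead assumes without loss of generality $|x|\le|y|$, decomposes $a(x)x-a(y)y=(a(x)-a(y))x+a(y)(x-y)$ with $a(z)=\log(e+|z|^2)|z|^2$, and runs a case analysis on the sign of $\langle x,x-y\rangle$: in one case both resulting terms are nonnegative outright, and in the other it uses $a(x)-a(y)\ge -a(y)$ together with $|y|^2-\langle x,y\rangle\ge |y|^2-|x||y|\ge 0$. (As printed, the paper's two cases appear to have their labels and a sign transposed, though the intended argument is recoverable.) Your symmetrized version avoids the case split and the WLOG normalization entirely, isolates exactly the hypotheses that matter ($\psi\ge 0$ and $r\mapsto r\psi(r)$ nondecreasing), and therefore applies verbatim to any radial damping with those two properties — for instance the $|u|^{\beta-1}u$ term of Theorem \ref{theo1} — whereas the paper's computation, while equally elementary, is tied to the specific algebra of the chosen decomposition.
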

\begin{proof}
	Let $a(z)=\log(e+|z|^2)|z|^2$ and $|x|\leq|y|$ :
	\begin{align*}
		\langle a(x)x-a(y)y,x-y\rangle=&\langle(a(x)-a(y))x,x-y\rangle+a(y)\langle x,x-y\rangle\\=&(a(x)-a(y))\langle x,x-y\rangle+a(y)|x-y|^{2}	
	\end{align*}	
	If $\langle x,x-y\rangle\geq0$ $(a(x)-a(y))\langle x,x-y\rangle+a(y)|x-y|^{2}\geq 0$\\Else,
	$\langle x,x-y\rangle<0$,	e.g:
	\begin{align*}
		(a(x)-a(y))\langle x,x-y\rangle+a(y)|x-y|^{2}	&=a(y)(|x-y|^{2}+\langle x,x-y\rangle) \\&=a(y)(\langle x-y,x-y\rangle-\langle x,x-y\rangle\\& a(y)\langle x-y,-y\rangle\\&=a(y)(|y|^2-\langle x,y\rangle)\\&\geq a(y)(|y|^2- |x||y|)\geq0 \end{align*}
	
\end{proof}
\begin{lem}\cite{JM}\label{lem5}
	Let $f:I\rightarrow\R$ be increasing function . Then there is $A\subset\R$ at most countable family such that for all $t$ in $A$,  $f$ is  discontinuous at $t$.
	Moreover, if $f$ is decreasing the $g=-f$.  	
\end{lem}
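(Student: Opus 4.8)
The plan is to prove the classical fact (Froda's theorem) that an increasing function has at most countably many discontinuities by attaching to the discontinuity set a pairwise disjoint family of open intervals and then exploiting the density and countability of $\mathbb{Q}$. First I would record the basic structural property of monotone functions: at every interior point $t\in I$ the one-sided limits
$$f(t^-)=\sup_{s<t}f(s),\qquad f(t^+)=\inf_{s>t}f(s)$$
exist and are finite, because for $s<t$ the values $f(s)$ are bounded above by $f(t)$ and for $s>t$ they are bounded below by $f(t)$; monotonicity moreover yields $f(t^-)\le f(t)\le f(t^+)$. Hence $f$ is continuous at $t$ exactly when $f(t^-)=f(t^+)$, and a discontinuity is precisely a point where the jump $f(t^+)-f(t^-)$ is strictly positive.

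Next I would set $A=\{t\in I:\ f(t^+)-f(t^-)>0\}$ and to each $t\in A$ associate the nonempty open interval $I_t=(f(t^-),f(t^+))\subset\R$. The crucial observation is that these intervals are pairwise disjoint: if $t_1<t_2$ are two points of $A$, then picking any $s$ with $t_1<s<t_2$ gives $f(t_1^+)\le f(s)\le f(t_2^-)$ by monotonicity, so $I_{t_1}$ lies entirely to the left of $I_{t_2}$ and the two do not meet.

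I would then invoke density of the rationals: each $I_t$, being a nonempty open interval, contains at least one rational, so I may define a map $\varphi:A\to\mathbb{Q}$ by letting $\varphi(t)$ be, say, the rational of smallest index (relative to a fixed enumeration of $\mathbb{Q}$) lying in $I_t$. Disjointness of the family $(I_t)_{t\in A}$ forces $\varphi$ to be injective, so $A$ injects into the countable set $\mathbb{Q}$ and is therefore at most countable. Finally, the decreasing case reduces at once to the increasing one: if $f$ is decreasing then $g=-f$ is increasing and, since negation is a homeomorphism of $\R$, has exactly the same set of discontinuities, so the result for $g$ transfers verbatim to $f$.

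The only mild bookkeeping is at the endpoints of $I$, where only a single one-sided limit is defined; there one simply uses the appropriate half-open interval, and the disjointness argument is unaffected. Since no estimate or limiting procedure beyond these elementary observations is involved, I do not expect any genuine obstacle, the entire argument being the standard Froda-type proof.
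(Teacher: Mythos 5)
Your proof is correct: it is the standard Froda-type argument (jump intervals $(f(t^-),f(t^+))$ are nonempty, pairwise disjoint by monotonicity, and each contains a rational, giving an injection of the discontinuity set into $\mathbb{Q}$), and the reduction of the decreasing case to the increasing one via $g=-f$ is handled properly. The paper itself gives no proof of this lemma --- it is merely cited from \cite{JM} --- so there is nothing internal to compare against; your argument is complete and establishes exactly what the lemma intends (note only that the paper's phrasing is garbled: what is meant, and what you prove, is that the set of discontinuities is at most countable).
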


\section{\bf Existence and uniqueness of strong solution .}
\subsection{Proof of Theorem \ref{theo1}}
To begin, we integrate the $L^{2}$ scalar product of the first equation with $u$ on $[0,t] $, yielding
	 \begin{align}\label{eq01}\|u(t)\|_{L^2}^2+2\int_0^t\|\nabla_{h} u\|_{L^2}^2+2\alpha\int_0^t \|u\|_{L^{\beta+1}}^{\beta+1} \leq \|u^0\|_{L^2}^2\end{align}
	 Using the $\dot{H}^{0.1}$ scalar product in conjunction with $u$:
\begin{align*}
\frac{1}{2}\frac{d}{dt}	\|\partial_{3} u(t)\|_{L^2}^2+\|\nabla_{h}\partial_{3} u\|_{L^2}^2+\frac{\alpha(\beta-1)}{2} \||u|^{\beta-3} |\partial_{3}|u|^2|^2\|_{L^{1}}\\+\alpha\int_{\R^{3}}|u|^{\beta-1} \partial_{3} |u|^2\leq\langle\partial_{3}(u\nabla u),\partial_{3}u\rangle_{L^{2}}\\
\frac{1}{2}\frac{d}{dt}	\|\partial_{3} u(t)\|_{L^2}^2+\frac{1}{2}\|\nabla_{h}\partial_{3} u\|_{L^2}^2+\frac{\alpha(\beta-1)}{2} \||u|^{\beta-3} |\partial_{3}|u|^2|^2\|_{L^{1}}\\+\alpha\int_{\R^{3}}|u|^{\beta-1} \partial_{3} |u|^2\leq8\int_{\R^{3}}|u|^{2} \partial_{3} |u|^2.\end{align*}
Integrate on $[0,t]$ we get
\begin{align*}
	\|\partial_{3} u(t)\|_{L^2}^2+\int_0^t\|\nabla_{h}\partial_{3} u\|_{L^2}^2+\alpha(\beta-1) \int_{0}^{t}\||u|^{\beta-3} |\partial_{3}|u|^2|^2\|_{L^{1}}\\+2\alpha\int_0^t\||u|^{\beta-1} \partial_{3} |u|^2\|_{L^1}\leq\|\partial_{3}u^0\|_{L^2}+16\int_0^t\||u|^{\beta-1} \partial_{3} |u|^2\|_{L^1} .\end{align*}
$\bullet$For $\beta>3$ we obtain the global existence for bounded solution.\\
$\bullet$For $\beta=3$ Indeed, the problem is limited to the case $0<\alpha<8$ because the inequality (\ref{eqth02}) is unsolvable for these $\alpha$ values.
To solve our statement, we will add the function  $\log(e+|u|^2)$ to $|u|^{2}u$.
We will solve the incompressible Navier-Stokes equations with logarithmic damping \ref{theo2} at the next party
\subsection{Proof of Theorem $\ref{theo2}$}
$$$$
$\bullet$ {\bf{A priori estimates}}  \\ We start by taking the $L^{2}$ scalar product of the first equation with $u$, we get \begin{align}\label{eq3}\|u(t)\|_{L^2}^2+2\int_0^t\|\nabla_{h} u\|_{L^2}^2+2\alpha\int_0^t\|\log(e+|u|^2)|u|^4\|_{L^1}\leq \|u^0\|_{L^2}^2.\end{align}
Also, taking the $\dot{H}^{0,1}$ scalar product of $(NS_{\log})$ with $u$ :
\begin{align*}
	\frac{1}{2}\frac{d}{dt}\|\partial_{3} u\|^2+\|\nabla_{h}\partial_{3} u\|^2_{L^2}+\frac{\alpha}{2} \int_{\R^3} \frac{|u|^2}{e+|u|^2} |\partial_{3}|u|^2|^2&\\+\frac{\alpha}{2} \int_{\R^3} \log(e+|u|^2) |\partial_{3}|u|^2|^2+\alpha \int_{\R^3} \log(e+|u|^2)|u|^2 |\partial_{3} u|^2 &\leq |\langle \partial_{3} u\nabla u,\partial_{3}u \rangle|
\end{align*}
Since
\begin{align*}
	|\langle \partial_{3} u\nabla u,\partial_{3}u \rangle|&\leq\sum_{i=1}^{3}\int_{\R^3}|\partial_{3}u_{i}\partial_{i}u\partial_{3}u|
\end{align*}
Thus
\begin{align*}
	\frac{1}{2}\frac{d}{dt}\|\partial_{3} u\|_{L^2}^2+\|\nabla_{h}\partial_{3} u\|^2_{L^2}+\alpha \int_{\R^3} \log(e+|u|^2) |\partial_{3}|u|^2|^2&\leq 6\|\nabla_{h}\partial_{3}u\|_{L^2}\|u\partial_{3}u\|_{L^2}\\&\leq\frac{1}{2}\|\nabla_{h}\partial_{3}u\|^{2}_{L^2}+3\|u\partial_{3}u\|_{L^2}\\
	\frac{1}{2}\frac{d}{dt}\|\partial_{3} u\|_{L^2}^2+\frac{1}{2}\|\nabla_{h}\partial_{3} u\|^2_{L^2}+\alpha \int_{\R^3} \log(e+|u|^2) |\partial_{3}|u|^2|^2&\leq3\|u\partial_{3}u\|^{2}_{L^2}.
\end{align*}
For$t\leq0$, put the following set:
$$$$
Let $A_{t}=\{x\in\R^3/\alpha \log(e+|u|^2)-3\geq0\}.$
Since
\begin{align*}
	\int_{A^c_{t}}(\alpha \log(e+|u|^2)-3)|u_n|^2 |\nabla u|^2&\leq (e^{\frac{3}{2\alpha}}-e)_{+}\int_{A^c_{t}}|\partial_{3} u|^2.	
\end{align*}
So
\begin{align*}\frac{1}{2}\frac{d}{dt}\|\nabla_{h} u\|^{2}_{L^{2}}+\frac{1}{2}\|\Delta  u\|^{2}_{L^{2}}+\frac{\alpha}{2} \|\frac{|u|^2}{e+|u|^2} |\partial_{3}|u|^2|^2\|_{L^{1}}\\+\frac{\alpha}{2} \|\log(e+|u|^2) |\partial_{3}|u|^2|^2\|_{L^{1}}&\leq (e^{\frac{3}{2\alpha}}-e)_{+}\int_{A^c_{t}}|\partial_{3} u|^2\\&\leq(e^{\frac{3}{2\alpha}}-e)_{+}\int_{\R^3}|\partial_{3} u|^2\\&\leq b_{\alpha}\int_{\R^3}|\partial_{3} u|^2, \end{align*}
where $b_{\alpha}=(e^{\frac{3}{2\alpha}}-e)_{+}$ \\
Integrate on $[0,T]$,we get:
\begin{align*}
	&	\|\partial_{3} u(t)\|_{L^2}^2+\int_{0}^{t}\|\nabla_{h}\partial_{3} u\|^2_{L^2}+\alpha \int_{0}^{t} \|\frac{|u|^2}{e+|u|^2} |\partial_{3}|u|^2|^2\|_{L^{1}}+\alpha \int_{0}^{t} \|\log(e+|u|^2) |\partial_{3}|u|^2|^2\|_{L^{1}}\\
	&\hskip1cm+\alpha \int_{0}^{t} \|\log(e+|u|^2)|u|^2 |\partial_{3} u|^2\|_{L^{1}}\leq\|\partial_{3} u^0\|_{L^2}^2+ b_{\alpha}\int_{0}^{t}	 \|\partial_{3}u\|^{2}_{L^2}.
\end{align*}
By Gronwall Lemma and (\ref{lem5}) we obtain :
\begin{align}\label{eq4}
	\nonumber	\|\partial_{3} u(t)\|_{L^2}^2+\int_{0}^{t}\|\nabla_{h}\partial_{3} u\|^2_{L^2}+\alpha \int_{0}^{t} \|\frac{|u|^2}{e+|u|^2} |\partial_{3}|u|^2|^2\|_{L^{1}}&\\+\alpha \int_{0}^{t} \|\log(e+|u|^2) |\partial_{3}|u|^2|^2\|_{L^{1}}+\alpha \int_{0}^{t} \|\log(e+|u|^2)|u|^2 |\partial_{3} u|^2\|_{L^{1}}&\leq\|\partial_{3} u(t)\|_{L^2}^2 e^{b_{\alpha t}}.
\end{align}
Absolutely, these bounds come from the approximate solutions via the Friederich's regularization procedure. %hence it remains to pass to the limit in the sequence of solutions of $(N.S.D_{\exp})_{n\in \N}$.
The passage to the limit follows using classical argument by combining Ascoli's Theorem and the Cantor Diagonal Process \cite{HB}. And this solution is in $L^{\infty}(\R^{+},H^{0,1}(\R^3))$ such that $\nabla_{h}u\in L^{2}_{loc}(\R^{+},H^{0,1}(\R^3))$ (\ref{eq3}) and (\ref{eq4}).
$\bullet${\bf{Uniqueness :}}
$$$$
This proof is inspired by \cite{HK}. Let $u,v$ two solutions of $(NSD_{\log})$ and $w=u-v$
$$	\partial_t u
-\Delta u+ u.\nabla u  +\alpha \log(e+|u|^2)|u|^2u =\;\;-\nabla p_{1}\hbox{ in } \mathbb R^+\times \mathbb R^3~~~~~~~~~~(1)$$
$$\partial_t v
-\Delta v+ v.\nabla v +\alpha \log(e+|v|^2)|v|^2v =\;\;-\nabla p_{2}\hbox{ in } \mathbb R^+\times \mathbb R^3~~~~~~~~~~~(2).$$

We make the difference $(1)-(2)$,we get: $$\partial_t w -\Delta w+w.\nabla u+v\nabla w +\alpha(\log(e+|u|^2)|u|^2u-\log(e+|v|^2)|v|^2u)=-\nabla (p_1-p_2).$$
Taking the $L^2$ scalar product, we have :
\begin{align*}
	\frac{1}{2}\frac{d}{dt}\|w\|^{2}_{L^2}+\|\nabla w\|^{2}_{L^2} +\alpha\langle(\log(e+|u|^2)|u|^2u-\log(e+|v|^2)|v|^2v),w\rangle&\leq|\langle w\nabla u, w\rangle|_{L^2}.
\end{align*}
Using Lemma \ref{lem46}, we get:
\begin{align*}
	\frac{1}{2} \frac{d}{dt} \|w\|^{2}_{L^{2}}+\|\nabla_{h} w\|^{2}_{L^{2}}&\leq| \int_{\R^3} (w.\nabla u).w dx|.
\end{align*}	
But $$\int_{\R^3} (w.\nabla u).w dx=\sum_{i=1}^{2}\sum_{j=1}^{3}\int_{\R^3}w_{i}\partial_{i}u_{j}w_{j}+\sum_{j=1}^{3}\int_{\R^3}w_{3}\partial_{3}u_{j}w_{j}=F_{1}+F_{2}.$$
By H\"{o}lder inequality, we get :
$$F_{1}\leq \sum_{i=1}^{2}\sum_{j=1}^{3} \|\partial_{i}u_{j}\|_{L_{v}^\infty L_{h}^2}\|w_{i}\|_{L_{v}^2 L_{h}^4}\|w_{j}\|_{L_{v}^2 L_{h}^4}.$$
Since $\dot{H}^\frac{1}{2}(\R^2)\hookrightarrow L^4(\R^2)$ and by interpolation we get:
$$\|w_{j}\|_{L_{h}^4}\leq c\|w_{j}\|_{L^2}^{\frac{1}{2}}\|\nabla w_{j}\|_{L^2}^{\frac{1}{2}},$$ so $$\|w_{j}\|_{L^2 L_{h}^4}\leq c \|w_{j}\|_{L^2}^{\frac{1}{2}}\|\nabla_h w_{j}\|_{L^2}^{\frac{1}{2}}.$$
We have:
$$\|\partial_{i}u_{j}\|^2=\int_{-\infty}^{x_3}\frac{d}{dz}\|\partial_{i}u_{j}\|^2dz=2\int_{-\infty}^{x_3}\partial_{z} \partial_{i}u_{j}\partial_{i}u_{j} dz\leq \|\partial_{3} \partial_{i}u_{j}\|_{L^2}\|\partial_{i}u_{j}\|_{L^2}. $$
Then $$F_{1}\leq c\|\partial_{3} \nabla_{h}u\|^{\frac{1}{2}}_{L^2}\|\nabla_{h}u\|^{\frac{1}{2}}_{L^2}\|w\|_{L^2}|\nabla_{h} w\|_{L^2}. $$
By Young inequality, we obtain:
\begin{align}\label{f1}F_{1}\leq \frac{1}{4}\|\nabla_{h} w\|^2_{L^2}+\frac{c}{4}(\|\partial_{3} \nabla_{h}u\|^{2}_{L^2}+\|\nabla_{h}u\|^{2}_{L^2})\|w\|^2_{L^2}. \end{align}
The same procedure for $F_{2}$ we get :
\begin{align*}
	F_{2}\leq \|w_3\|_{L_{v}^\infty L_h^2}\|\nabla_{h} w\|^{\frac{1}{2}}_{L^2}\|\partial_{3} \nabla_{h}u\|^{\frac{1}{2}}_{L^2}\|\nabla_{h}u\|_{L^2}\|w\|_{L^2} .
\end{align*}
Since
$$ \|w_3\|_{L_{v}^\infty L_h^2}=2\int_{-\infty}^{x_3}\int_{\R^2}w_3(x_h,z)\partial_{3}w_3(x_h,z)dx_hdz.$$ Using the fact that $\nabla.w=0$ so ${\rm div_h} w_h=-\partial_{3}w_3$ and
$$ \|w_3\|_{L_{v}^\infty L_h^2}=-2\int_{-\infty}^{x_3}\int_{\R^2} w_3(x_h,z){\rm div_h} w_h w_3(x_h,z)dx_h dz \leq 2\|{\rm div_h} w_h\|_{L^2}\|w_3\|_{L^2}.$$
By Young inequality, we obtain:
\begin{align}\label{f2}
	F_{2}\leq \frac{1}{4}\|\nabla_{h} w\|^2_{L^2}+\frac{c}{4}(\|\partial_{3} \nabla_{h}u\|^{2}_{L^2}+\|\partial_{3}u\|^{2}_{L^2})\|w\|^2_{L^2}.
\end{align}
Hence, according to ($\ref{f1}$) and (\ref{f2}) we get:
\begin{align*}
	\frac{1}{2} \frac{d}{dt} \|w\|^{2}_{L^{2}}+\frac{1}{2}\|\nabla_{h} w\|^{2}_{L^{2}}+\alpha\langle(\log(e+|u|^2)|u|^2u-\log(e+|v|^2)|v|^2v),w\rangle&\leq c\|\partial_{3} \nabla_{h}u\|^{2}_{L^2}\|w\|^2_{L^2} \\+c(\|\partial_{3}u\|^{2}_{L^2}+\|\nabla_{h}u\|^{2}_{L^2})\|w\|^2_{L^2}.
\end{align*}
Integrate on $[0,t]$, we have :
\begin{align*}
	\|w(t)\|^{2}_{L^{2}}+\int_{0}^{t}\|\nabla_{h} w\|^{2}_{L^{2}}&\leq\|w(0)\|^{2}_{L^{2}} +c\int_{0}^{t}(\|\partial_{3} \nabla_{h}u\|^{2}_{L^2}+\|\partial_{3}u\|^{2}_{L^2}+\|\nabla_{h}u\|^{2}_{L^2})\|w\|^2_{L^2} .
\end{align*}
Then, by Gronwall Lemma :
\begin{align}\label{M}
	\|w(t)\|^{2}_{L^{2}}\leq\|w(0)\|^{2}_{L^{2}} e^{c't}. \end{align}
But $w(0)=u(0)-v(0)=0$, then $u=v$.\\
	$\ast${\bf{Right continuity:}}
$$$$
$\ast$ Right continuity at $0$:
Let $t_{k}>0$ such that $t_{k}\underset{k\rightarrow\infty}\rightarrow 0 $ then
\begin{align}\underset{k\rightarrow\infty}{\limsup}\|\partial_{3} u(t_{k})\|^{2}_{L^{2}}\leq \|\partial_{3} u^{0}\|_{L^2}^{2}.\end{align}
For $(3.7)$ we have the Right continuity at $0$.\\
$\ast$ Right continuity at $t_{0}$:
Let $t_{0}>0$
$$
\begin{cases}
	\partial_t v
	-\Delta_{h} v+ v.\nabla v  +\alpha \log(e+|u|^2)|u|^2u =\;\;-\nabla p\hbox{ in } \mathbb R^+\times \mathbb R^3\\
	{\rm div}\, v = 0 \hbox{ in } \mathbb R^+\times \mathbb R^3\\
	v(0,x) =u(t_{0},x) \;\;\hbox{ in }\mathbb R^3.
\end{cases}
$$
By {\bf uniqueness} of solution $v(t)=u(t+t_{0})$ moreover $u$ is continuous on the right at $0$. Then $u$ is continuous on the right at $t_{0}$.\\
$\ast$ Continuity of $(NSAn_{\log})_{\alpha}$ in $H^{0,1}$:\\Let $t_{1}\leq t_{2}$\\
$\|\nabla u\|_{L^{2}}$ is continuous on $\R^{+}\backslash A$, where
$A=\{t\in\R+/f ~~~\mbox{discontinous at} ~~~t\}$ is at most countable set with $f(t)=e^{-tb_{\alpha}}\|\partial_{3} u(t)\|^{2}_{L^2}$, since
$$ \|\partial_{3} u(t_{2})\|^{2}_{L^2}\leq\|\partial_{3} u(t_{1})\|^{2}_{L^2} e^{b_{\alpha}(t_{2}-t_{1})}$$
we get
$$\|\partial_{3} u(t_{2})\|^{2}_{L^2}e^{-b_{\alpha}t_{2}}\leq \|\partial_{3} u(t_{1})\|^{2}_{L^2} e^{-b_{\alpha}t_{1}}.$$
Thus, $f$ is a decreasing function. According to $(\ref{lem5})$, $f$ is continuous on $\R^{+}\backslash A$.
\section{\bf Appendix.}
In this part, we give a simple proof of $u\in C(\R^{+},L^{2}(\R^3))$, which is inspired by\cite{JM},  where $u$ is a solution of $(NSAn_{\log})_{\alpha}$ given by Friederich approximation.\\
By inequality  ($\ref{eqth2}$) we get
$$\limsup_{t\rightarrow0}\|u(t)\|_{L^{2}}\leq\|u^{0}\|_{L^{2}}.$$
Thus, Proposition \ref{prop1}-(3) implies that
$$\limsup_{t\rightarrow0}\|u(t)-u^0\|_{L^{2}}=0.$$
which ensures the continuity at 0.\\
$\bullet$ Let $t_{0}>0$. For $\delta\in(0,t_{0})$ and $n\in\N$, put the following function $$u_{n,\delta}=u_{\varphi(n)}(t+\delta).$$
Applying the same method to prove the uniqueness to $u_{\varphi(n)}$ and using (\ref{M}) we get
$$\|u_{\varphi(n)}(t+\delta)-u_{\varphi(n)}(t)\|^{2}_{L^{2}}\leq \|u_{\varphi(n)}(\delta)-u_{\varphi(n)}(0)\|^{2}_{L^{2}}\exp(cF_{n}(t)),$$
where $$F_{n}(t)=\int_{0}^{t}(\|\partial_{3} \nabla_{h}u_{\varphi(n)}\|^{2}_{L^2}+\|\partial_{3}u_{\varphi(n)}\|^{2}_{L^2}+\|\nabla_{h}u_{\varphi(n)}\|^{2}_{L^2}).$$
By using inequalities (\ref{eq3}) and (\ref{eq4}), we get
\begin{align*}
	F_{n}(t)&\leq\|\partial_{3}u^{0}\|^{2}_{L^2}e^{b_{\alpha}t}	 +\|\partial_{3}u^{0}\|^{2}_{L^2}\frac{e^{b_{\alpha}t}-1}{b_{\alpha}}+\frac{\|u^{0}\|^{2}_{L^2}}{2}\\
&\leq(1+\frac{1}{b_{\alpha}})\|\partial_{3}u^{0}\|^{2}_{L^2}e^{2b_{\alpha}t_{0}}+\frac{\|u^{0}\|^{2}_{L^2}}{2}.
\end{align*}	
For $t\in[0,2t_{0}]$, we have:
$$F_{n}(t)\leq(1+\frac{1}{b_{\alpha}})\|\partial_{3}u^{0}\|^{2}_{L^2}e^{2b_{\alpha}t_{0}}+\frac{\|u^{0}\|^{2}_{L^2}}{2}=M_{\alpha}(t_{0},u^0).$$
Then for $t=t^0$ and $t=t_{0}-\delta$, we get:
\begin{align}\label{A}
	\|u_{\varphi(n)}(t+\delta)-u_{\varphi(n)}(t)\|^{2}_{L^{2}}&\leq \|u_{\varphi(n)}(\delta)-u_{\varphi(n)}(0)\|^{2}_{L^{2}}\exp(cM_{\alpha}(t_{0},u^0)).
\end{align}	
\begin{align}\label{B}
	\|u_{\varphi(n)}(t-\delta)-u_{\varphi(n)}(t)\|^{2}_{L^{2}}&\leq \|u_{\varphi(n)}(\delta)-u_{\varphi(n)}(0)\|^{2}_{L^{2}}\exp(cM_{\alpha}(t_{0},u^0)).
\end{align}	
The idea is to lower the terms on the left and increase the term on the right of the inequalities (\ref{A}) and (\ref{B}).\\
For the right term, we write
\begin{align*}
	 \|u_{\varphi(n)}(\delta)-u_{\varphi(n)}(0)\|^{2}_{L^{2}}&=\|u(\delta)\|^{2}_{L^{2}}+\|u(0)\|^{2}_{L^{2}}-2Re \langle u_{\varphi(n)}(\xi)-u_{\varphi(n)}(0) \rangle_{L^2}.
\end{align*}
By using inequality (\ref{eqth1}), we obtain:
\begin{align*}
	\|u_{\varphi(n)}(\delta)-u_{\varphi(n)}(0)\|^{2}_{L^{2}}&\leq2\|u^0\|^{2}_{L^{2}}-2Re \langle u_{\varphi(n)}(\delta)-u_{\varphi(n)}(0) \rangle_{L^2}\\&\leq2\|u^0\|^{2}_{L^{2}}-2Re \langle u_{\varphi(n)}(\delta),u^0\rangle_{L^{2}}-2Re \langle u_{\varphi(n)}(\delta), u_{\varphi(n)}(0)-u^0\rangle_{L^{2}}.
\end{align*}
But
$$|\langle u_{\varphi(n)}(\delta), u_{\varphi(n)}(0)-u^0\rangle_{L^{2}}|\leq \|u_{\varphi(n)}(\delta)\|_{L^2}\|u_{\varphi(n)}(0)-u^0\|_{L^2},$$	
then $$\lim_{n\rightarrow\infty}|\langle u_{\varphi(n)}(\delta), u_{\varphi(n)}(0)-u^0\rangle_{L^{2}}|=0.$$
On the other hand, and by using that $u_{\varphi(n)}(\delta)$ converge weakly in $L^2(\R^3)$ to $u(\delta),$
we get $$\liminf_{n\rightarrow\infty}\|u_{\varphi(n)}(\delta)-u_{\varphi(n)}(0)\|_{L^2}\leq2\|u^{0}\|^{2}_{L^{2}}-2Re\langle u_{\varphi(n)}(\delta),u^0\rangle_{L^2}.$$
For the left term, we have, for all $q,N\in\N$
\begin{align*} \|J_{N}(\theta_{q}.(u_{\varphi(n)}(t\pm\delta)-u_{\varphi(n)}(t)))\|^{2}_{L^{2}}&\leq
\|\theta_{q}.(\theta_{q}.(u_{\varphi(n)}(t\pm\delta)-u_{\varphi(n)}(t)))\|^{2}_{L^{2}}\\
&\leq \|u_{\varphi(n)}(t\pm\delta)-u_{\varphi(n)}(t)\|^{2}_{L^{2}}.
\end{align*}
Using the fact that $$\lim_{n\rightharpoonup\infty}\|\theta_{q}(u_{\varphi(n)}-u)\|_{L^\infty([0,T_{q}],H^{-4})}=0,$$
we get:
\begin{align*}
	\|J_{N}(\theta_{q}.(u(t\pm\delta)-u(t)))\|^{2}_{L^{2}}\leq 2(\|u\|^{2}_{L^{2}}-Re\langle u(\delta),u^0\rangle_{L^2})\exp(cM_{\alpha}(t_{0},u^0)).
\end{align*}
By applying the Monotonic Convergence Theorem in the order $N\rightarrow0$ and $q\rightarrow\infty$ we get:
\begin{align*}
	\|\theta_{q}.(u(t\pm\delta)-u(t))\|^{2}_{L^{2}}\leq 2(\|u\|^{2}_{L^{2}}-Re\langle u(\delta),u^0\rangle_{L^2})\exp(cM_{\alpha}(t_{0},u^0)).
\end{align*}
Using the continuity at $0$ and make $\delta\rightarrow0$, we get the continuity at $t_{0}$, which ends the proof.

\end{document}